\documentclass[12pt,reqno]{amsart}
\usepackage[margin=2.5cm]{geometry}
\usepackage{amsfonts,amssymb,amsthm, mathtools}
\usepackage{amsmath}
\usepackage{datetime}
\usepackage{tabularx}
\everymath{\displaystyle}
\usepackage{xcolor}
\usepackage{enumerate,physics}
\usepackage{multirow}
\usepackage{float}
\everymath{\displaystyle}

\definecolor{fgreen}{RGB}{44,144, 14}

\numberwithin{equation}{section} 
\newtheorem{theorem}{Theorem}[section] 
\newtheorem{proposition}[theorem]{Proposition} 
 
\newtheorem{lemma}[theorem]{Lemma} 
\theoremstyle{definition}
\newtheorem{definition}[theorem]{Definition} 
\newtheorem{remark}[theorem]{Remark} 
\newtheorem{example}[theorem]{Example}

\usepackage[colorlinks=true,citecolor=cyan,  urlcolor=blue,linkcolor=blue]{hyperref}

\def\C{\mathbb C}

\def\C{\mathbb {C}}

\newcommand{\thmref}[1]{Theorem~\ref{#1}}

\begin{document}

	\title[On the Product of coninvolutory Affine Transformations]{On the Product of coninvolutory Affine Transformations}
	\author[S.  Dutta, K. Gongopadhyay \and R. Mondal]{Sandipan Dutta, Krishnendu Gongopadhyay \and 
		Rahul Mondal}
	
	\address{Mizoram University,
		Tanhril, Aizawl 796004, Mizoram, India}
	\email{sandipandutta98@gmail.com, mzut330@mzu.edu.in}
	
	\address{Indian Institute of Science Education and Research (IISER) Mohali,
		Knowledge City,  Sector 81, S.A.S. Nagar 140306, Punjab, India}
	\email{krishnendu@iisermohali.ac.in}

	\address{Indian Institute of Science Education and Research (IISER) Mohali,
		Knowledge City,  Sector 81, S.A.S. Nagar 140306, Punjab, India}
	\email{canvas.rahul@gmail.com}

	\subjclass[2010]{Primary 15A86; Secondary 20E45, 51N30}
	\keywords{Affine transformation,\textit{c}-reversible,coninvolution}
	
	\date{ @\currenttime , \today}
   \begin{abstract}
A complex matrix is called \emph{coninvolutory} if 
$T\overline{T}=I$. In this paper, we study decompositions of affine 
transformations in $\mathrm{Aff}(n,\mathbb{C})=\mathrm{GL}(n,\mathbb{C})\ltimes \mathbb{C}^n$ 
into products of coninvolutions. 
We prove that an affine transformation \(g\) is a product of two coninvolutions in \(\mathrm{Aff}(n,\C)\) if and only if its linear part \(L(g)\) is \(c\)-reversible; that is, \(L(g)\) is conjugate to \(\overline{L(g)}^{-1}\) in \(\mathrm{GL}(n,\C)\). Equivalently, \(g\) is conjugate to \(\overline{g}^{-1}\) in \(\mathrm{Aff}(n,\C)\). We further 
characterize elements that are products of three coninvolutions via 
consimilarity and show that every $g=(A,v)\in \mathrm{Aff}(n,\mathbb{C})$ with 
$|\det(A)|=1$ can be expressed as a product of at most four coninvolutions.
\end{abstract}
   
    	\maketitle 

	\section{Introduction} 

The decomposition of individual elements into products of involutions (elements of order two) is a classical problem in geometry  and linear algebra. 
More generally, the study of products of two involutions \cite{dj,ghr} is closely connected to the concept of 
\emph{reversibility} in groups: an element \(A\) is said to be \emph{reversible} in a group \(G\) if it is conjugate to its inverse, and \emph{strongly reversible} if the reversing element can itself be chosen to be an involution in \(G\). 
Equivalently, an element is strongly reversible if and only if it can be expressed as a product of two involutions in \(G\); see \cite{fs,wo} for a detailed exposition of reversibility in diverse mathematical contexts.

For the group of invertible matrices over the complex numbers, the decomposition of elements into products of involutions has attracted considerable attention from diverse perspectives. 
Liu \cite{li} established necessary conditions for a matrix to be representable as a product of three involutions, 
while Kn\"uppel and Nielsen \cite{kn} showed that any element in \(\mathrm{SL}(V)\) can be expressed as a product of 
at most four involutions.

Note that the field of complex numbers carries a natural involution given by complex 
conjugation. A matrix \(T \in \mathrm{M}(n,\mathbb{C})\) is called \emph{coninvolutory} if 
\(T\overline{T}=I\), see \cite{gr}. This naturally raises the question of whether matrices over 
\(\mathbb{C}\) can be decomposed into products of coninvolutions. 
Although the decomposition of matrices into products of classical involutions is well-studied, 
the analogous problem for coninvolutions has received comparatively less attention.{ In \cite{hm,bhh} the authors have discussed this problem regarding coninvolutions.}

Ballantine \cite{ba} proved that a matrix \(T\) is a product of four 
coninvolutions if and only if \(|\det(T)|=1\).   Abara, Merino, and Paras 
\cite{amp} showed that \(T\) is a product of two coninvolutions if and only if 
\(T\) is similar to \({\overline{T}}^{-1}\). 
Coninvolutions are also closely related to the notions of consimilarity and 
condiagonalizability; see \cite{hh88,hh86,ik}.

More recently, in the context of the classification of projective transformations, \cite{gm} introduced the notion of conjugate-reversible (or $c$-reversible) elements in $\mathrm{SL}(n,\mathbb{C})$. 
An element $A \in \mathrm{SL}(n,\mathbb{C})$ is called \emph{$c$-reversible} if 
$A$ is conjugate to $\overline{A}^{-1}$, and \emph{strongly $c$-reversible} if the 
reversing element can be chosen to be coninvolutory in $\mathrm{SL}(n,\mathbb{C})$. They proved that every 
$c$-reversible element in $\mathrm{SL}(n,\mathbb{C})$ is strongly $c$-reversible 
and characterized such elements using trace invariants. Consequently, the 
classification of products of two coninvolutions reduces to the study of 
strongly $c$-reversible elements.

A similar notion of \(c\)-reversibility can be introduced for affine 
transformations, providing a natural framework for studying the decomposition 
of affine maps into products of coninvolutions. 

Let \(\mathrm{Aff}(n, \mathbb{C})\) denote the group of affine transformations 
of \(\mathbb{C}^n\), given by
\[
g: x \mapsto Ax + v, \quad A \in \mathrm{GL}(n, \mathbb{C}), \; v \in \mathbb{C}^n.
\]
This group can be identified with the semidirect product
\[
\mathrm{Aff}(n, \mathbb{C}) \cong \mathrm{GL}(n, \mathbb{C}) \ltimes \mathbb{C}^n.
\]
We denote such an affine transformation $g$  by $(A, v)$, and  linear part $A$ of $g$ is denoted by $L(g)$. 
 \begin{definition}\label{def:crev}
{An element \(g=(A,v)\in \mathrm{Aff}(n,\mathbb{C})\) is called \(c\)-reversible if there exists \(h\in \mathrm{Aff}(n,\mathbb{C})\) such that \(hgh^{-1}=\overline{g}^{-1}\), where \(\overline{g}=(\overline{A},\overline{v})\); it is called strongly \(c\)-reversible if, in addition, \(h\overline{h}=e=(I_n,0)\), in which case \(h\) is called a coninvolution.}
\end{definition}
In this paper, we investigate \(c\)-reversible and strongly \(c\)-reversible 
elements in the affine group \(\mathrm{Aff}(n,\mathbb{C})\). This work is partly motivated by a problem posed by O'Farrell and Short 
\cite[p.~78--79]{fs}. Reversibility in the Euclidean isometry group 
\(\mathrm{O}(n,\mathbb{R}) \ltimes \mathbb{R}^n\) was studied in \cite{sh}, 
and was later extended in \cite{gl} to the complex and quaternionic settings. 
As a result, reversibility is now well-understood in the groups 
\(\mathrm{U}(n,\mathbb{F}) \ltimes \mathbb{F}^n\), where 
\(\mathbb{F} = \mathbb{R}, \mathbb{C}, \mathbb{H}\).

The reversible and strongly reversible elements of the affine group were 
subsequently classified in \cite{glm} for the  affine group 
\(\mathrm{Aff}(n,\mathbb{F})\).
In particular, it was shown in \cite{glm} that every element of 
\(\mathrm{Aff}(n,\mathbb{F})\) can be expressed as a product of at most four 
involutions.

Building on the study of \(c\)-reversibility in the linear group $\mathrm{GL}(n, \C)$, 
in the present paper we investigate \(c\)-reversible and strongly 
\(c\)-reversible elements in the affine group \(\mathrm{Aff}(n,\mathbb{C})\). 
Our goal is to understand the decomposition of affine transformations 
into products of coninvolutions and to determine the minimal number of 
coninvolutions required to express an arbitrary element of the group.

The main results of this paper are summarized in the following three theorems. First, the following theorem provides a complete classification of strongly \(c\)-reversible 
elements. 

\begin{theorem}\label{Thm1}
Let \( g \in \mathrm{Aff}(n,\mathbb{C}) \). Then the following are equivalent:
\begin{enumerate}[(i)]
    \item \( g \) is \(c\)-reversible.
    
    \item \(g\) is strongly $c$-reversible, i.e. \( g \) is a product of two coninvolutions.
    
    \item \( L(g) \) is \(c\)-reversible in \( \mathrm{GL}(n,\mathbb{C}) \).

\end{enumerate}
\end{theorem}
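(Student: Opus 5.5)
Set $A=L(g)$. We prove (ii)$\Rightarrow$(i)$\Rightarrow$(iii)$\Rightarrow$(ii); all but the last step are formal.

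\textbf{The formal implications.} First we record the group laws
\[
(A,v)(B,w)=(AB,Aw+v),\qquad (A,v)^{-1}=(A^{-1},-A^{-1}v),\qquad \overline{(A,v)}=(\overline{A},\overline{v}).
\]
Conjugation is an automorphism of $\mathrm{Aff}(n,\C)$ as a real group, and $h$ is a coninvolution exactly when $h^{-1}=\overline{h}$.

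For (ii)$\Rightarrow$(i), suppose $g=h_1h_2$ with $h_1,h_2$ coninvolutions. Then $\overline{g}^{-1}=\overline{h_2}^{-1}\,\overline{h_1}^{-1}=h_2h_1=h_2 g h_2^{-1}$. So $g$ is strongly $c$-reversible with reversing coninvolution $h_2$. Conversely, if $hgh^{-1}=\overline g^{-1}$ with $h\overline h=e$, then $g=h^{-1}(\overline{g}^{-1}h)$. Here $h^{-1}$ is a coninvolution, and $\overline{g}^{-1}h$ is one too, because $(\overline{g}^{-1}h)\overline{(\overline{g}^{-1}h)}=\overline g^{-1}h g^{-1}\overline h=\overline g^{-1}\,\overline g\, h\overline h=e$. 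So "strongly $c$-reversible" and "product of two coninvolutions" agree.

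For (i)$\Rightarrow$(iii), the map $L$ is a homomorphism that commutes with bar. Hence $hgh^{-1}=\overline g^{-1}$ gives $L(h)AL(h)^{-1}=\overline A^{-1}$.

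\textbf{The main step, (iii)$\Rightarrow$(ii).} By the result of \cite{gm} (equivalently, \cite{amp}), $A$ is a product of two coninvolutions in $\mathrm{GL}(n,\C)$. The plan is to reduce to a normal form for $g$ and then lift.

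\emph{Reduction.} Conjugating $g$ by a translation $(I,w)$ replaces $v$ by $v+(I-A)w$. Decompose $\C^n=V_1\oplus V'$, where $V_1$ is the generalized $1$-eigenspace of $A$ and $V'$ is the sum of the other generalized eigenspaces. On $V'$ the map $I-A$ is invertible, so we may assume $v\in V_1$. The question is whether this reduction is compatible with coninvolutions. It is, because conjugation by any $k\in\mathrm{Aff}(n,\C)$ preserves the property of being a product of two coninvolutions: $k h\overline{k}^{-1}$ is a coninvolution whenever $h$ is, and $kgk^{-1}=(kh_1\overline k^{-1})(\overline k h_2 k^{-1})$.

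\emph{Splitting.} Next, $c$-reversibility of $A$ means that $A$ and $\overline A^{-1}$ have the same Jordan data. The eigenvalue map $\lambda\mapsto 1/\overline\lambda$ fixes $1$. Therefore $A|_{V_1}$ is itself $c$-reversible, and so is $A|_{V'}$. Since $v\in V_1$, the element $g$ splits as $(A|_{V_1},v)\oplus(A|_{V'},0)$. The second summand is linear and is handled by \cite{gm}.

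\emph{The unipotent block.} On $V_1$ we have a unipotent $U$ with translation $v$. Embed $(U,v)$ as the linear map $\begin{pmatrix}U&v\\0&1\end{pmatrix}$ acting on $V_1\oplus\C$. The key claim is that after a further conjugation we may take $U$ to be a real matrix in Jordan form, with $v$ either $0$ or a suitable real standard vector. Equivalently, the $(m+1)\times(m+1)$ matrix is conjugate within the affine group to a real unipotent affine map $g_0$. Since $g_0$ is real, it is a product of two coninvolutions: it suffices to write $g_0$ as a product of two real affine involutions, since a real involution $s$ satisfies $s\overline s=s^2=e$. That such a decomposition exists is the standard result that real unipotent affine maps are strongly reversible by real affine involutions, as in \cite{glm}.

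\emph{Main obstacle.} The hardest step is the normal-form claim on $V_1$. Conjugating by the centralizer of $U$ together with translations must bring $v$ to a real vector while keeping $U$ real. I would first try to choose the conjugator in the centralizer of $U$ blockwise, scaling the relevant coordinate of $v$ to $1$. If that fails, I would instead construct the reversing coninvolution explicitly as $h=(P,w)$. Here $P$ is a coninvolution with $PA\overline P=\overline A^{-1}$ from \cite{gm}, chosen so that $P$ preserves $V_1$. We then need to solve the linear-over-$\mathbb R$ equation on $w$ obtained from matching translation parts, together with the constraint $P\overline w+w=0$. Solvability of this equation on $V_1$ would be verified using the Jordan structure.
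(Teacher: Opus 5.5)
Your formal implications are correct. So are the reduction to $v\in V_1$ and the splitting $g=(A|_{V_1},v)\oplus(A|_{V'},0)$. Your Jordan-data argument for the $c$-reversibility of $A|_{V'}$ cleanly replaces the paper's Sylvester-equation argument that the reverser is block diagonal.

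The real difference is the unipotent block. The paper works in $\mathfrak{aff}(n,\C)$. For $(\mathrm{J}(0,n),x)$ it exhibits an explicit coninvolutory reverser $B=\mathrm{diag}(a,-a,a,\dots)$ with $|a|=1$, where $a$ is tuned to the argument of $x_n$, together with a translation $w$ (Lemmas~\ref{lem:3}--\ref{lem:5}). It then exponentiates and takes direct sums over Jordan blocks. You instead conjugate to a real unipotent affine map and import strong reversibility from the real affine group \cite{glm}, using that real involutions are coninvolutions. Your route is more conceptual: it shows why no condition on $v$ appears, since every complex unipotent affine map has a real representative in its conjugacy class. The cost is dependence on the real classification, whereas the paper's argument is self-contained.

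As written, however, the step that carries all the weight is only announced (``I would first try\dots''), so the proof is incomplete there. It closes quickly:
\begin{enumerate}[(1)]
\item Conjugate by $(S,0)$ so that $U=\bigoplus_j\mathrm{J}(1,k_j)$.
\item $\operatorname{im}(U-I)$ is spanned by all standard basis vectors except the last vector $e^{(j)}_{k_j}$ of each block. So a translation $(I,w)$ makes $v=\sum_j c_j e^{(j)}_{k_j}$.
\item Conjugate by $(D,0)$ with $D=\bigoplus_j s_jI_{k_j}$, which commutes with $U$. Take $s_j=c_j^{-1}$ if $c_j\neq 0$ and $s_j=1$ otherwise.
\end{enumerate}
This makes $v$ a $0$--$1$ vector. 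That is real, which is all you need. Your stronger ``single standard vector'' normal form is true but requires non-diagonal centralizer elements and is unnecessary.

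Your fallback would not work as stated. The coninvolutory reverser $P$ cannot be taken from \cite{gm} independently of $v$. Already for $m=1$, $U=1$, $v\neq 0$, the translation condition of Lemma~\ref{lem:2} reads $Pv+\overline{v}=0$, which forces $P=-\overline{v}/v$. This is exactly why the paper must choose $a$ from the argument of $x_n$.
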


Consequently, the problem of decomposing an affine transformation into a 
product of two coninvolutions reduces to the study of the \(c\)-reversibility 
of its linear part. Next we determine when an affine transformation is a product of three coninvolutions {and use consimilarity of two matrices for this purpose}. 

\begin{definition}\label{def:consimilar}
An element \(g \in \mathrm{Aff}(n,\mathbb{C})\) is said to be 
\emph{consimilar} to \(h \in \mathrm{Aff}(n,\mathbb{C})\) if there exists 
\(k \in \mathrm{Aff}(n,\mathbb{C})\) such that
\[
h = k g \overline{k}^{-1}.
\]
\end{definition}

Note that consimilarity defines an equivalence relation on 
\(\mathrm{Aff}(n,\mathbb{C})\).

\begin{theorem}\label{Thm2}
Let \(g = (A,v) \in \mathrm{Aff}(n,\mathbb{C})\) satisfy \(|\det(A)| = 1\). 
Then \(g\) can be expressed as a product of three coninvolutions if and only 
if \(g\) is consimilar to a product of two coninvolutory affine transformations.
\end{theorem}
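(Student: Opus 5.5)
The proof rests on one elementary identity. Coninvolutions are exactly the elements $h$ with $\overline{h}=h^{-1}$, and the set of coninvolutions is stable under consimilarity: if $h\overline{h}=e$ and $k\in\mathrm{Aff}(n,\C)$, then $k h\overline{k}^{-1}$ satisfies
\[
(k h \overline{k}^{-1})\,\overline{(k h\overline{k}^{-1})} = k h \overline{k}^{-1}\,\overline{k}\,\overline{h}\, k^{-1} = k h\overline{h} k^{-1} = e .
\]
More generally, consimilarity sends a product of two coninvolutions $h_1h_2$ to $k h_1 h_2 \overline{k}^{-1} = (k h_1 \overline{k}^{-1})(\overline{k} h_2 k^{-1})$. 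The second factor is $\overline{k}h_2\overline{\overline{k}}^{-1}$, which is again a consimilarity image of a coninvolution, hence a coninvolution.

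For the direction ($\Leftarrow$), suppose $g = k(h_1h_2)\overline{k}^{-1}$, or equivalently $k^{-1}g\overline{k}=h_1h_2$ after renaming $k$. Consimilarity is symmetric: from $h=kg\overline{k}^{-1}$ we get $g=k^{-1}h\overline{k^{-1}}^{-1}$. Then
\[
g = k h_1 h_2 \overline{k}^{-1} = (k h_1 \overline{k}^{-1})(\overline{k}\, h_2\, \overline{k}^{-1})\,\bigl(\overline{k}\,k^{-1}\bigr)\cdot(\text{correction}).
\]
Done directly, though, this gives only two coninvolutions, and that is too strong: $g$ would be strongly $c$-reversible, which fails in general. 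So I need to be careful. The correct computation is
\[
g = k h_1h_2\overline{k}^{-1} = (k h_1 \overline{k}^{-1})(\overline{k} h_2 k^{-1})\,(k\overline{k}^{-1}).
\]
The first two factors are coninvolutions by the identity above. The third factor $c=k\overline{k}^{-1}$ satisfies $c\overline{c}=k\overline{k}^{-1}\overline{k}k^{-1}=e$, so it is a coninvolution as well. Hence $g$ is a product of three coninvolutions. In this direction the hypothesis $|\det A|=1$ is not even needed.

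For the direction ($\Rightarrow$), suppose $g=c_1c_2c_3$ with each $c_i\overline{c_i}=e$. The aim is to absorb one factor by a consimilarity. Take $k=c_1$, or a suitable square-root-like element. The plan uses the classical fact (the analogue of the linear result in \cite{hh88}) that every coninvolution is of the form $k\overline{k}^{-1}$. For matrices this is the known theorem that $E\overline{E}=I$ iff $E=S\overline{S}^{-1}$. For an affine coninvolution $c=(C,w)$ we have $C\overline{C}=I$ and $C\overline{w}+w=0$. Write $C=S\overline{S}^{-1}$ and look for $k=(S,u)$ with $k\overline{k}^{-1}=c$. This reduces to a linear equation $u - C\overline{u}=w$, which is solvable by taking $u=w/2$: indeed $w/2 - C\overline{w}/2 = w/2+w/2=w$.

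With $c_3=k\overline{k}^{-1}$, we get
\[
k^{-1}g\overline{k} = (k^{-1}c_1\overline{k})(k^{-1}c_2\overline{k})\,\overline{k}^{-1}\cdots
\]
Regrouping as in the first direction, this gives $k^{-1} g\,\overline{k} = (k^{-1}c_1 \overline{k})(\overline{k}^{-1} c_2 k)$. Both factors are consimilarity images of coninvolutions, hence coninvolutions. So $g$ is consimilar to a product of two coninvolutions.

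The main obstacle is verifying the bookkeeping of bars in these regroupings, together with the affine version of the representation $c=k\overline{k}^{-1}$. The linear part of that representation I take from the literature; the translation part is handled by the explicit choice $u=w/2$ above.
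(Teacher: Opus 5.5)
Your argument is correct and is essentially the paper's. For ($\Rightarrow$) you write one factor as $k\overline{k}^{-1}$ via the same $u=w/2$ construction as Lemma~\ref{lem:6}; you absorb $c_3$ where the paper absorbs $g_1$, which is only a mirror-image choice. For ($\Leftarrow$) you regroup $kh_1h_2\overline{k}^{-1}$ into three factors, as the paper does.

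Your converse factorization $(kh_1\overline{k}^{-1})(\overline{k}h_2k^{-1})(k\overline{k}^{-1})$ is in fact the right one. The paper's $(h^{-1}g_1h)(h^{-1}g_2h)(h^{-1}\overline{h})$ uses conjugates of coninvolutions, and these need not be coninvolutions. You should, however, delete your preliminary claim $kh_1h_2\overline{k}^{-1}=(kh_1\overline{k}^{-1})(\overline{k}h_2k^{-1})$: its right-hand side actually equals $kh_1h_2k^{-1}$.
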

Finally,  the following theorem gives that at most four coninvolutions are needed to decompose an arbitrary affine transformation. 
\begin{theorem}\label{Thm3}
Every element \(g = (A,v) \in \mathrm{Aff}(n,\mathbb{C})\) with 
\(|\det(A)|=1\) can be expressed as a product of at most four coninvolutions.
\end{theorem}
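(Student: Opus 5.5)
Our plan is to reduce to the linear statement of Ballantine \cite{ba}: every $T\in \mathrm{GL}(n,\C)$ with $|\det T|=1$ is a product of four coninvolutions. We then use Theorem~\ref{Thm1} to absorb the translation part. Throughout we use the composition rule $(A,v)(B,w)=(AB,Aw+v)$ and $\overline{(A,v)}=(\overline{A},\overline{v})$. From these, $(A,v)$ is a coninvolution in $\mathrm{Aff}(n,\C)$ exactly when $A\overline{A}=I$ and $A\overline{v}+v=0$. In particular, every linear coninvolution $(C,0)$ is an affine coninvolution.

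\textbf{Step 1: split off a linear factor.} Write $g=(A,v)$ with $|\det A|=1$ and factor
\[
g=(A,v)=(A_1,v)\,(A_2,0), \qquad A=A_1A_2 .
\]
The idea is to choose $A_1$ to be $c$-reversible in $\mathrm{GL}(n,\C)$ and $A_2$ to be a product of two linear coninvolutions. By Theorem~\ref{Thm1}, $(A_1,v)$ is then a product of two affine coninvolutions whatever $v$ is. The factor $(A_2,0)=(C_3,0)(C_4,0)$ is trivially a product of two affine coninvolutions. Together this writes $g$ as a product of four coninvolutions.

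\textbf{Step 2: find the linear factorization.} It remains to show that every $A$ with $|\det A|=1$ factors as $A=A_1A_2$ with both $A_1$ and $A_2$ products of two linear coninvolutions. Ballantine's theorem gives $A=C_1C_2C_3C_4$ with each $C_i\overline{C_i}=I$. Take $A_1=C_1C_2$ and $A_2=C_3C_4$. By \cite{amp}, $A_1$ is similar to $\overline{A_1}^{-1}$; one can also check this directly, since $C_1\overline{A_1}^{-1}C_1^{-1}=C_1C_2$. So $A_1$ is $c$-reversible.

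\textbf{Step 3: assemble.} Apply Theorem~\ref{Thm1} (iii)$\Rightarrow$(ii) to $(A_1,v)$, giving $(A_1,v)=h_1h_2$ with $h_1,h_2$ affine coninvolutions. Hence
\[
g=h_1h_2(C_3,0)(C_4,0).
\]
If fewer factors suffice, the statement "at most four" is still satisfied.

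\textbf{Main obstacle.} The only genuinely delicate input is that $c$-reversibility of the linear part alone guarantees strong $c$-reversibility for an arbitrary translation part. That is exactly Theorem~\ref{Thm1}, so we may take it as given. Had it not been available, the plan would be to build the coninvolution explicitly. Starting from $A_1=C_1C_2$, we would look for $h_2=(C_2,w)$ with $C_2\overline{w}+w=0$ and then set $h_1=(A_1,v)h_2^{-1}$. The coninvolution condition on $h_1$ turns into a conjugate-linear equation for $w$, and we would solve it by decomposing $\C^n$ into the $\pm$-eigenspaces of the conjugate-linear involution $x\mapsto C_2\overline{x}$.
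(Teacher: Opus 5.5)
Your proof is correct, but it is organized differently from the paper's.

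The paper's route is a block decomposition:
\begin{enumerate}[(1)]
\item It uses the conjugacy invariance of products of $2m$ coninvolutions to pass to the normal form $g=(T\oplus U,\,0\oplus\widetilde{v})$, with $1\notin\sigma(T)$ and $U$ unipotent.
\item It applies Ballantine's theorem only to $T$.
\item It applies Proposition~\ref{pro1} to the unipotent affine block $(U,\widetilde{v})$.
\item It assembles four block-diagonal coninvolutions, the last two acting trivially on the unipotent block.
\end{enumerate}

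Your route works on all of $A$ at once. You apply Ballantine's theorem to $A$ itself and check directly that $C_1C_2$ is $c$-reversible; this is correct, since $\overline{C_i}^{-1}=C_i$. You then put the entire translation into the first factor via $(A,v)=(C_1C_2,v)(C_3C_4,0)$, and invoke Theorem~\ref{Thm1}.

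What each approach buys:
\begin{enumerate}[(a)]
\item Your version avoids the normal form and the conjugacy lemma. It gives the factorization in the original coordinates, with two of the four factors linear. It also shows that Theorem~\ref{Thm3} is a formal corollary of Theorem~\ref{Thm1} plus Ballantine's theorem.
\item The paper's version needs only the unipotent case, not the full implication (iii)$\Rightarrow$(ii).
\item The genuinely hard input, Proposition~\ref{pro1}, is the same in both.
\end{enumerate}

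One caution about your fallback sketch, which the proof does not actually need. With $C_1,C_2$ fixed, the conditions become $C_2\overline{w}=-w$ and $(C_1-\overline{C_2})w=v+C_1\overline{v}$, where $C_1-\overline{C_2}=(A_1-I)C_2^{-1}$. Your eigenspace and dimension count does solve this when $1\notin\sigma(A_1)$. It can fail otherwise, for example when $C_1=C_2=I$ and $v$ is real and nonzero. That eigenvalue-$1$ case is exactly why the paper needs the Lie-algebra argument, so the fallback would not replace Theorem~\ref{Thm1}.
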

{Note that, every coninvolutory matrix has determinant modulus $1$. 
Hence, Theorems~ \ref{Thm2} and~ \ref{Thm3} do not hold when $|\det(A)| \neq 1$.}

 We remark that, while analogous results could in principle be formulated for affine 
transformations over the quaternions, we do not pursue this direction here. 
The main obstruction is that the property of being strongly \(c\)-reversible 
is not invariant under conjugation for quaternionic affine transformations. 
Consequently, a quaternionic analogue would require a suitably conjugation-invariant 
notion of coninvolution, which must first be carefully formulated.

\subsection{Structure of the paper.}
{The paper is organized as follows. In Section \ref{sec:2}, we review the necessary terminology and background. 
Sections \ref{sec:3} and \ref{sec:4} contain our main results on products of two and three coninvolutions (Theorems \ref{Thm1} and \ref{Thm2}). 
Finally in Section \ref{sec:5}, we prove our theorem on the product of four coninvolutions (Theorem \ref{Thm3}).}
    \section{Preliminaries}\label{sec:2}
    \begin{definition}[cf.~{\cite[p.~94]{Ro}}]
Let $\lambda \in \mathbb{C}$. The \emph{Jordan block} $\mathrm{J}(\lambda,m)$ is the $m \times m$ matrix
\[
\mathrm{J}(\lambda,m)=
\begin{pmatrix}
\lambda & 1 & 0 & \cdots & 0 \\
0 & \lambda & 1 & \cdots & 0 \\
\vdots & \ddots & \ddots & \ddots & \vdots \\
0 & \cdots & 0 & \lambda & 1 \\
0 & \cdots & \cdots & 0 & \lambda
\end{pmatrix}.
\]
That is, $\lambda$ appears on the diagonal entries, $1$ on the super-diagonal entries,
and zero elsewhere. A block diagonal matrix whose diagonal blocks are Jordan blocks
is called a \emph{Jordan form}.
\end{definition}

Jordan canonical forms in $\mathrm{GL}(n,\mathbb{C})$ are well studied in the literature;
see {\cite[Chapter~5, Chapter~15]{Ro}}.

Recall that an element $U \in \mathrm{GL}(n,\mathbb{C})$ is called \emph{unipotent}
if all of its eigenvalues are equal to $1$.

\begin{lemma}[cf.~{\cite[Theorem~15.1.1, Theorem~5.5.3]{Ro}}]
Let $A \in \mathrm{GL}(n,\mathbb{C})$ be unipotent. Then there exists
$S \in \mathrm{GL}(n,\mathbb{C})$ such that
\[
SAS^{-1}
=
I_{m_0}
\oplus
J(1,m_1)
\oplus
\cdots
\oplus
J(1,m_k),
\]
where $m_0 \ge 0$, $m_i \ge 2$ for $i \ge 1$, and
\[
m_0 + m_1 + \cdots + m_k = n.
\]
\end{lemma}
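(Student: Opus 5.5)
This is the standard Jordan normal form theorem specialized to the case where all eigenvalues equal $1$, so the plan is to invoke the Jordan decomposition of \cite[Theorem~5.5.3]{Ro} and then reorganize the blocks.

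\textbf{Step 1.} Since $A \in \mathrm{GL}(n,\mathbb{C})$ and $\mathbb{C}$ is algebraically closed, the Jordan canonical form theorem gives some $S_0 \in \mathrm{GL}(n,\mathbb{C})$ such that $S_0 A S_0^{-1} = J(\lambda_1,r_1)\oplus\cdots\oplus J(\lambda_t,r_t)$. Here the $\lambda_i$ are the eigenvalues of $A$, counted via the characteristic polynomial $\prod_i (x-\lambda_i)^{r_i}$.

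\textbf{Step 2.} Because $A$ is unipotent, every $\lambda_i=1$. This follows since conjugation preserves the characteristic polynomial, which here is $(x-1)^n$. The Jordan blocks of size $1$ are exactly $J(1,1)=(1)$. We collect all of these into a single identity block $I_{m_0}$, where $m_0$ is their number (possibly $0$). The remaining blocks $J(1,m_1),\dots,J(1,m_k)$ each have $m_i\ge 2$.

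\textbf{Step 3.} Reordering the diagonal blocks is itself a conjugation by a permutation matrix $P$, which permutes the standard basis vectors blockwise. Setting $S=PS_0$ therefore yields $SAS^{-1}=I_{m_0}\oplus J(1,m_1)\oplus\cdots\oplus J(1,m_k)$. The size identity $m_0+m_1+\cdots+m_k=n$ holds because the block sizes partition the $n$ coordinates.

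There is no real obstacle here. The only point that needs care is Step 3: one must check that a block permutation matrix $P$ conjugates a block-diagonal matrix to the block-diagonal matrix with the same blocks in permuted order. This holds because $P$ maps each block's coordinate subspace onto the corresponding subspace in the new order.
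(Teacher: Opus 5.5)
Your proposal is correct and takes essentially the same approach as the paper. The paper gives no argument beyond citing the Jordan canonical form theorem in \cite{Ro}; your collecting of the $1\times 1$ blocks into $I_{m_0}$ and reordering via a permutation conjugation is exactly the routine step that citation leaves implicit.
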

We now prove some basic facts concerning conjugate reversibility
and strong conjugate reversibility.
\begin{lemma}
The properties of being $c$-reversible and strongly $c$-reversible
are invariant under conjugation in  $\mathrm{Aff}(n,\mathbb{C})$.
\end{lemma}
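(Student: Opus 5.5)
The plan is to show that if $g$ is (strongly) $c$-reversible and $k\in \mathrm{Aff}(n,\mathbb{C})$ is arbitrary, then $kgk^{-1}$ is (strongly) $c$-reversible, by writing down an explicit reversing element. First I record that complex conjugation $x\mapsto \overline{x}$, applied entrywise to $(A,v)$, is a group automorphism of $\mathrm{Aff}(n,\mathbb{C})$. Indeed, composition is given by $(A,v)(B,w)=(AB,Aw+v)$, and conjugation respects sums and products, so $\overline{gh}=\overline{g}\,\overline{h}$ and $\overline{g^{-1}}=\overline{g}^{-1}$.

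Now suppose $hgh^{-1}=\overline{g}^{-1}$ and set $g'=kgk^{-1}$. Then $\overline{g'}^{-1}=\overline{k}\,\overline{g}^{-1}\,\overline{k}^{-1}$. Put $h'=\overline{k}\,h\,k^{-1}$. We get
\[
h' g' h'^{-1}=\overline{k}\,h\,k^{-1}\,k g k^{-1}\,k\,h^{-1}\,\overline{k}^{-1}=\overline{k}\,(hgh^{-1})\,\overline{k}^{-1}=\overline{k}\,\overline{g}^{-1}\,\overline{k}^{-1}=\overline{g'}^{-1}.
\]
So $g'$ is $c$-reversible.

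For the strong version, assume in addition $h\overline{h}=e$. Then
\[
h'\overline{h'}=\overline{k}\,h\,k^{-1}\,k\,\overline{h}\,\overline{k}^{-1}=\overline{k}\,(h\overline{h})\,\overline{k}^{-1}=e,
\]
so $h'$ is a coninvolution. Since conjugacy is symmetric, both properties are invariant under conjugation.

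The only point needing care is that conjugation is an automorphism of the semidirect product, that is, it is compatible with the translation part. This is immediate from the multiplication formula, so I expect no real obstacle. The main thing is choosing the reversing element as $\overline{k}hk^{-1}$ rather than $khk^{-1}$.
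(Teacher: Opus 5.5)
Your proof is correct and is essentially the paper's argument: the paper also takes $\overline{k}hk^{-1}$ as the reversing element for $kgk^{-1}$ and checks $(\overline{k}hk^{-1})\overline{(\overline{k}hk^{-1})}=\overline{k}(h\overline{h})\overline{k}^{-1}=e$ in the strong case. Your version is slightly cleaner: the paper's middle step has a typo, writing $k\,\overline{g}^{-1}\overline{k}^{-1}$ where $\overline{k}\,\overline{g}^{-1}\overline{k}^{-1}$ is meant, and you state explicitly that entrywise conjugation is an automorphism of the semidirect product, which the paper uses without comment.
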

\begin{proof}
Suppose $g$ is $c$-reversible. Then there exists $h \in \mathrm{Aff}(n,\mathbb{C})$
such that
\[
h g h^{-1} = \overline{g}^{\, -1}.
\]
If $k \in \mathrm{Aff}(n,\mathbb{C})$ then
$(\overline{k} h k^{-1})(k g k^{-1})(\overline{k} h k^{-1})^{-1}
= \overline{k} h g h^{-1} \overline{k}^{\, -1}
= {k}\,\overline{g}^{\, -1}\,\overline{k}^{\, -1}
= \overline{(k g k^{-1})}^{\, -1}.$
Hence $k g k^{-1}$ is $c$-reversible.

If, in addition, $h \overline{h} = e$, then
$
(\overline{k} h k^{-1}) \, \overline{(\overline{k} h k^{-1})}
= \overline{k} (h \overline{h}) \overline{k}^{-1}
= e.$
Thus strong $c$-reversibility is also preserved under conjugation.
\end{proof}

\begin{lemma}
An element $g \in \mathrm{Aff}(n,\mathbb{C})$ is strongly $c$-reversible
if and only if $g$ is a product of two coninvolution.
\end{lemma}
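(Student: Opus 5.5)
The argument is a direct two-way verification from Definition~\ref{def:crev}, using only the group structure of $\mathrm{Aff}(n,\mathbb{C})$. It rests on one basic fact: complex conjugation $g\mapsto \overline{g}$, $(A,v)\mapsto(\overline{A},\overline{v})$, is an involutive group automorphism of $\mathrm{Aff}(n,\mathbb{C})$. This holds because $(A,v)(B,w)=(AB,Aw+v)$ and conjugation respects products and sums. Consequently $\overline{g^{-1}}=\overline{g}^{\,-1}$ and $\overline{\overline{g}}=g$. I will also record that $h$ is a coninvolution, i.e.\ $h\overline{h}=e$, if and only if $\overline{h}=h^{-1}$, which in turn is equivalent to $h^{-1}=\overline{h}$ being a coninvolution.

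For the forward direction, suppose $g$ is strongly $c$-reversible. Then there is a coninvolution $h$ with $hgh^{-1}=\overline{g}^{\,-1}$. Using $h^{-1}=\overline{h}$, this reads $hg\overline{h}=\overline{g}^{\,-1}$. Taking inverses gives $h\,g^{-1}\,\overline{h}^{\,-1}$ on one side, but it is cleaner to rearrange directly. I set $\tau_1:=hg$ and claim it is a coninvolution. Indeed,
\[
\tau_1\overline{\tau_1}=hg\,\overline{h}\,\overline{g}=\overline{g}^{\,-1}\overline{g}=e .
\]
Then $g=h^{-1}\tau_1=\overline{h}\,\tau_1$, and $\overline{h}$ is a coninvolution, so $g$ is a product of two coninvolutions.

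For the converse, suppose $g=\tau_1\tau_2$ with $\tau_i\overline{\tau_i}=e$, so that $\tau_i^{-1}=\overline{\tau_i}$. I take $h=\tau_2$ (or, depending on which ordering checks out, $h=\overline{\tau_1}=\tau_1^{-1}$) and compute
\[
hgh^{-1}=\tau_2\tau_1\tau_2\tau_2^{-1}=\tau_2\tau_1 .
\]
On the other hand,
\[
\overline{g}^{\,-1}=(\overline{\tau_1}\,\overline{\tau_2})^{-1}=\overline{\tau_2}^{\,-1}\overline{\tau_1}^{\,-1}=\tau_2\tau_1 ,
\]
where the last step uses $\overline{\tau_i}^{\,-1}=\tau_i$. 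The two expressions agree, so $h=\tau_2$ is a coninvolutory reversing element, and $g$ is strongly $c$-reversible.

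There is no real obstacle here. The only care needed is bookkeeping with the identities $\overline{\tau}^{\,-1}=\tau$ and $\tau^{-1}=\overline{\tau}$ for coninvolutions, and confirming that conjugation commutes with multiplication and inversion in the semidirect product. That last point I would verify explicitly from $(A,v)^{-1}=(A^{-1},-A^{-1}v)$.
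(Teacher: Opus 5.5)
Your proof is correct and is essentially the paper's: your factor $\tau_1=hg$ coincides with the paper's $\overline{g}^{\,-1}h$ (since $\overline{g}^{\,-1}h=hgh^{-1}h$). In the converse, your reverser $\tau_2$ works just as well as the paper's $a^{-1}=\tau_1^{-1}$, since both conjugate $g$ to $\tau_2\tau_1=\overline{g}^{\,-1}$, so the hedge is unnecessary; the abandoned aside on taking inverses is misstated (the inverse of $hg\overline{h}$ is $\overline{h}^{\,-1}g^{-1}h^{-1}=hg^{-1}\overline{h}$), but it plays no role in the argument.
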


\begin{proof}
Suppose $g$ is strongly $c$-reversible. Then there exists an element $h \in \mathrm{Aff}(n,\mathbb{C})$ such that 
	\[
	hgh^{-1} = \overline{g}^{-1} \quad \text{with} \quad h\overline{h} = e.
	\]
	 Therefore \(g = (h^{-1})(\overline{g}^{-1}h)\). Since $h$ is a coninvolution, $h^{-1}$ is also a coninvolution. Also, $\overline{g}^{-1}h$ is a coninvolution , because
	\[
	(\overline{g}^{-1}h)\overline{(\overline{g}^{-1}h)} = \overline{g}^{-1}hg^{-1}\overline{h} = h\overline{h} = e.
	\]
	Conversely, suppose $g$ is a product of two coninvolution. That is, suppose $g = ab$, where $a\overline{a} = e$ and $b\overline{b} = e$. Then 
	\[
	a^{-1}ga = ba = \overline{g}^{-1}.
	\]
	Thus, $g$ is strongly $c$-reversible.
\end{proof}

    \section{Product of two coninvolutions}\label{sec:3}
   
We embed $\mathbb{C}^{n}$ into $\mathbb{C}^{n+1}$ as the affine hyperplane
\[
P=\{(z,1)\in \mathbb{C}^{n+1} \mid z\in \mathbb{C}^{n}\}.
\]
Consider the homomorphism 
\[
\Theta : \operatorname{Aff}(n,\mathbb{C}) \longrightarrow \mathrm{GL}(n+1,\mathbb{C})
\]
defined by
\[
\Theta((A,v))=
\begin{pmatrix}
A & v \\
\mathbf{0} & 1
\end{pmatrix},
\]
where $0$ denotes the zero vector in $\mathbb{C}^{n}$. 
The action of $\Theta(\operatorname{Aff}(n,\mathbb{C}))$ on $P$ coincides with the natural action of 
$\operatorname{Aff}(n,\mathbb{C})$ on $\mathbb{C}^{n}$.

In the next two lemmas (\ref{lem:1} and \ref{lem:2}), we classify reversible and strongly reversible elements in the affine group $\operatorname{Aff}(n,\mathbb{C})$.

   \begin{lemma}\label{lem:1}
Let $g=(A,v)\in \mathrm{Aff}(n,\mathbb{C})$. Then $g$ is conjugate reversible in $\mathrm{Aff}(n,\mathbb{C})$ if and only if there exists $h=(B,w)\in \mathrm{Aff}(n,\mathbb{C})$ such that:
\begin{enumerate}[(a)]
    \item $BAB^{-1}=\overline{A}^{-1}$,
    \item $(\overline{A}^{-1}-I_n)w = Bv+\overline{A}^{-1}\overline{v}$.
\end{enumerate}
\end{lemma}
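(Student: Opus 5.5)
The plan is a direct computation with the composition law in $\mathrm{Aff}(n,\mathbb{C})$. No earlier results are needed beyond the group structure. Writing an affine map as $(A,v)\colon x\mapsto Ax+v$, composition is
\[
(B,w)(A,v)=(BA,\;Bv+w),\qquad (A,v)^{-1}=(A^{-1},\,-A^{-1}v).
\]
Equivalently, one can pass through the faithful homomorphism $\Theta$ and multiply the block matrices $\begin{pmatrix} B & w\\ \mathbf{0} & 1\end{pmatrix}$ and $\begin{pmatrix} A & v\\ \mathbf{0} & 1\end{pmatrix}$. Since $\overline{g}=(\overline{A},\overline{v})$, we get
\[
\overline{g}^{-1}=(\overline{A}^{-1},\,-\overline{A}^{-1}\overline{v}).
\]

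First I rewrite the defining condition $hgh^{-1}=\overline{g}^{-1}$ in the equivalent multiplicative form $hg=\overline{g}^{-1}h$. This avoids inverting $h$, and the equivalence is immediate because $h$ is invertible. With $h=(B,w)$ the two sides are
\[
hg=(BA,\;Bv+w),\qquad \overline{g}^{-1}h=(\overline{A}^{-1}B,\;\overline{A}^{-1}w-\overline{A}^{-1}\overline{v}).
\]

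Two affine maps coincide exactly when their linear parts and translation parts agree. Equating linear parts gives $BA=\overline{A}^{-1}B$, which is condition (a) $BAB^{-1}=\overline{A}^{-1}$. Equating translation parts gives $Bv+w=\overline{A}^{-1}w-\overline{A}^{-1}\overline{v}$. Rearranging this yields $(\overline{A}^{-1}-I_n)w=Bv+\overline{A}^{-1}\overline{v}$, which is condition (b).

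Every step is reversible, so both directions of the equivalence follow at once. Given $h=(B,w)$ satisfying (a) and (b), the same computation read backwards gives $hg=\overline{g}^{-1}h$, so $g$ is $c$-reversible. There is no real obstacle here; the only care needed is the sign and order conventions in the composition and inverse formulas, specifically that the translation of $\overline{g}^{-1}$ is $-\overline{A}^{-1}\overline{v}$ and that $\overline{g}^{-1}$ acts on the left of $h$.
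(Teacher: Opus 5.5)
Your proposal is correct and follows the paper's proof essentially step for step: both rewrite $hgh^{-1}=\overline{g}^{-1}$ as $hg=\overline{g}^{-1}h$, compute $hg=(BA,\,Bv+w)$ and $\overline{g}^{-1}h=(\overline{A}^{-1}B,\,\overline{A}^{-1}w-\overline{A}^{-1}\overline{v})$, and equate linear and translation parts to obtain (a) and (b). Your explicit remark that every step is reversible is a small improvement, since the paper leaves the converse direction implicit.
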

   \begin{proof}
Let $g=(A,v)\in \mathrm{Aff}(n,\mathbb{C})$. Then
\[
\overline{g}^{-1}(z)=\overline{A}^{-1}z-\overline{A}^{-1}\overline{v}.
\]
For $h=(B,w)\in \mathrm{Aff}(n,\mathbb{C})$, we have
\[
hgh^{-1}=\overline{g}^{-1} \;\Longleftrightarrow\; hg=\overline{g}^{-1}h.
\]
Now,
\[
hg(z)=BAz+(Bv+w), \quad 
\overline{g}^{-1}h(z)=\overline{A}^{-1}Bz+\overline{A}^{-1}w-\overline{A}^{-1}\overline{v}.
\]
Comparing terms gives
\[
BAB^{-1}=\overline{A}^{-1}, \quad 
(\overline{{A}^{-1}}-I_n)w = Bv+\overline{A}^{-1}\overline{v}.
\]
This completes the proof.
\end{proof}
    In the next lemma we shall discuss strongly conjugate reversibile of the elements of $\mathrm{Aff}(n,\mathbb{C})$
\begin{lemma}\label{lem:2}
Let $g=(A,v)\in \mathrm{Aff}(n,\mathbb{C})$. Then $g$ is strongly conjugate reversible in $\mathrm{Aff}(n,\mathbb{C})$ if and only if there exists $h=(B,w)\in \mathrm{Aff}(n,\mathbb{C})$ such that:
\begin{enumerate}[(a)]
    \item $BAB^{-1}=\overline{A}^{-1}$ and $B\overline{B}=I_n$,
    \item $(\overline{A}^{-1}-I_n)w = Bv+\overline{A}^{-1}\overline{v}$ and $B\overline{w}+w=0$.
\end{enumerate}
\end{lemma}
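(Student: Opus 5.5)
By definition, $g$ is strongly $c$-reversible exactly when there is some $h=(B,w)\in\mathrm{Aff}(n,\mathbb{C})$ with $hgh^{-1}=\overline{g}^{-1}$ and $h\overline{h}=e$. The plan is to treat these two requirements separately.

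The first requirement, $hgh^{-1}=\overline{g}^{-1}$, is already characterized by Lemma~\ref{lem:1}: it is equivalent to $BAB^{-1}=\overline{A}^{-1}$ together with $(\overline{A}^{-1}-I_n)w=Bv+\overline{A}^{-1}\overline{v}$.

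So it remains to translate $h\overline{h}=e$ into conditions on $B$ and $w$. We compute the composition directly. For $z\in\mathbb{C}^n$ we have $\overline{h}(z)=\overline{B}z+\overline{w}$, and therefore
\[
h\overline{h}(z)=B(\overline{B}z+\overline{w})+w=B\overline{B}z+(B\overline{w}+w).
\]
One can equally multiply the matrices $\Theta(h)\Theta(\overline{h})$ in $\mathrm{GL}(n+1,\mathbb{C})$; the result is the same. Since $e=(I_n,0)$, comparing the linear parts and the translation parts shows that $h\overline{h}=e$ holds if and only if
\[
B\overline{B}=I_n \quad\text{and}\quad B\overline{w}+w=0 .
\]

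Putting the two parts together gives exactly conditions (a) and (b) of the statement, and each step is an equivalence, so both directions hold.

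There is no real obstacle here. The only care needed is to conjugate the translation part correctly when forming $\overline{h}$, and to keep the order of composition straight.
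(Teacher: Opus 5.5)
Your proposal is correct and matches the paper's proof: Lemma~\ref{lem:1} handles the conjugation condition, and expanding $h\overline{h}=(B\overline{B},\,B\overline{w}+w)=(I_n,0)$ handles the coninvolution condition. You also state explicitly that each step is an equivalence, which makes both directions clearer than the paper's write-up, where the argument is phrased in only one direction.
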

 \begin{proof}
   
From Lemma \ref{lem:1}, $g$ is conjugate reversible if and only if
\[
BAB^{-1}=\overline{A}^{-1}
\quad \text{and} \quad
(\overline{A}^{-1}-I_n)w = Bv+\overline{A}^{-1}\overline{v}.
\]
As $g$ is strongly \textit{c}-reversible, therefore
\[
(B,w)(\overline{B},\overline{w}) = (I_n,0).
\]
It implies
\[
B\overline{B}=I_n \quad \text{and} \quad B\overline{w}+w=0.
\]
This completes the proof.
 \end{proof}
 \begin{lemma}[\cite{glm}]
Every element $g \in \mathrm{Aff}(n, \mathbb{C})$ is conjugate to an element of the form 
$g = (A, v)$, where 
\[
A = T \oplus U,
\]
with $T \in \mathrm{GL}(n-m, \mathbb{C})$ and $U \in \mathrm{GL}(m, \mathbb{C})$ such that $T$ has no eigenvalue equal to $1$, and $U$ has only eigenvalue $1$. Moreover, the translation part $v \in \mathbb{C}^n$ can be written as
\[
v = (0, \ldots, 0, v_1, \ldots, v_m),
\]
where $0 \le m \le n$ is the multiplicity of the eigenvalue $1$ of the linear part $A$.

Furthermore, if $1$ is not an eigenvalue of $A$ (i.e., $m = 0$), then $g$ is conjugate to $(A, 0)$.
\end{lemma}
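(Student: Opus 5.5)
The plan is to perform two successive conjugations in $\mathrm{Aff}(n,\mathbb{C})$: a purely linear one that splits $A$ into its eigenvalue-$1$ part and the rest, followed by a pure translation that removes the component of $v$ lying outside the eigenvalue-$1$ part. First record the two conjugation formulas. For $S\in\mathrm{GL}(n,\mathbb{C})$,
\[
(S,0)(A,v)(S^{-1},0)=(SAS^{-1},Sv).
\]
For $w\in\mathbb{C}^n$,
\[
(I_n,w)(A,v)(I_n,-w)=(A,\,v+w-Aw)=(A,\,v-(A-I_n)w).
\]
Both follow at once by composing the maps $z\mapsto Az+v$ and checking the result.

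\emph{Step 1: the linear conjugation.} Use the primary (generalized eigenspace) decomposition
\[
\mathbb{C}^n=\ker(A-I_n)^n\oplus \operatorname{im}(A-I_n)^n .
\]
Both summands are $A$-invariant. The first has dimension $m$, the algebraic multiplicity of the eigenvalue $1$. On the first summand $A$ is unipotent; on the second, $1$ is not an eigenvalue. Take $S$ to be the change of basis that lists a basis of $\operatorname{im}(A-I_n)^n$ first and a basis of $\ker(A-I_n)^n$ last. Then $SAS^{-1}=T\oplus U$ with $T\in\mathrm{GL}(n-m,\mathbb{C})$ having no eigenvalue $1$ and $U\in\mathrm{GL}(m,\mathbb{C})$ unipotent. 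If desired, $U$ can also be put in the unipotent Jordan form of the earlier lemma by conjugating inside that block. After this step $g$ is conjugate to $(T\oplus U,v')$, where $v'=Sv=(v_T,v_U)$ with $v_T\in\mathbb{C}^{n-m}$ and $v_U\in\mathbb{C}^m$.

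\emph{Step 2: the translation conjugation.} Apply the second formula with $w=(w_1,0)$, where $w_1\in\mathbb{C}^{n-m}$. This gives the new translation part
\[
\bigl(v_T-(T-I_{n-m})w_1,\; v_U\bigr).
\]
Since $1$ is not an eigenvalue of $T$, the matrix $T-I_{n-m}$ is invertible. Choosing $w_1=(T-I_{n-m})^{-1}v_T$ kills the first $n-m$ coordinates. The result is the required form $v=(0,\dots,0,v_1,\dots,v_m)$ with $(v_1,\dots,v_m)=v_U$. When $m=0$ the whole of $A$ plays the role of $T$, so $A-I_n$ is invertible and the same choice makes the translation part zero; hence $g$ is conjugate to $(A,0)$.

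There is no serious obstacle. The only point requiring care is that the translation conjugation leaves the linear part unchanged, and that the invertibility of $T-I$ is exactly the hypothesis that the $T$-block has no eigenvalue $1$. The part of $v$ in the unipotent block generally cannot be removed, because $U-I_m$ is singular, which is why the coordinates $v_1,\dots,v_m$ survive.
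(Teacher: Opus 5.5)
Your argument is correct and complete. The Fitting decomposition $\mathbb{C}^n=\ker(A-I_n)^n\oplus\operatorname{im}(A-I_n)^n$ gives the block form $T\oplus U$. Conjugating by the translation $(I_n,(w_1,0))$ with $w_1=(T-I_{n-m})^{-1}v_T$ then removes the non-unipotent part of the translation, precisely because $T-I_{n-m}$ is invertible.

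The paper itself gives no proof of this lemma; it quotes the result from \cite{glm}. Your two-step conjugation, first linear and then by a translation, is the standard argument for it.
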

 \par It is difficult to find a strongly $c$-reverser directly when an affine transformation has a non-trivial unipotent part. To overcome this difficulty, we pass to the Lie algebra of the affine group and use the adjoint action.
 We introduce a notion which is similar to adjoint reality of the Lie algebra of classical lie groups (see \cite{gm23}) called adjoint conjugate reality. Let $\mathfrak{g} = \mathfrak{aff}(n, \mathbb{C})$ denote the Lie algebra of the affine group $\mathrm{Aff}(n, \mathbb{C})$, that is,
\[
\mathfrak{g} = \mathfrak{gl}(n, \mathbb{C}) \ltimes \mathbb{C}^n.
\]
The Lie algebra $\mathfrak{aff}(n,\mathbb{C}) = \mathfrak{gl}(n,\mathbb{C}) \ltimes \mathbb{C}^n$ admits a faithful embedding into $\mathfrak{gl}(n+1,\mathbb{C})$ given by
\[
(X,u) \;\longmapsto\;
\begin{pmatrix}
X & u \\
0 & 0
\end{pmatrix}.
\]
Note that the adjoint action of $\mathrm{Aff}(n,\mathbb{C})$ on its Lie algebra $\mathfrak{aff}(n,\mathbb{C})$ is given by
\[
\mathrm{Ad} : G \times \mathfrak{g} \to \mathfrak{g},
\]
\[
\mathrm{Ad}_{(A,v)}(X,u)
=
\left( AXA^{-1},\; A(u - Xv) \right),
\]
where $G = \mathrm{Aff}(n,\mathbb{C})$ and $\mathfrak{g} = \mathfrak{aff}(n,\mathbb{C})$.

An element $X \in \mathfrak{g}$ is called \emph{adjoint $c$-real} (or \emph{adjoint conjugate real}) if there exists $g \in G$ such that
\[
\mathrm{Ad}_g(X) = -\overline{X}.
\]
It is called \emph{strongly adjoint $c$-real} if, in addition, $g \overline{g} = I$. Note that if $\mathrm{Ad}_g(X) = -\overline{X}$, then
\[
g(\exp X)g^{-1} = \exp(-\overline{X}) = (\overline{\exp X})^{-1}.
\] 
In particular, if $X \in \mathfrak{g}$ is adjoint $c$-real (respectively, strongly adjoint $c$-real), then $\exp X$ is $c$-reversible (respectively, strongly $c$-reversible).
\begin{remark}
The converse is not true in general. For example,
\[
A=\operatorname{diag}(r+2\pi i,\,-r), \quad r\in \mathbb{R},
\]
is not \emph{adjoint $c$-real} since
\[
A \not\sim -\overline{A}.
\]
However,
\[
\exp(A)=\operatorname{diag}(e^r,\,e^{-r})
\]
has eigenvalues in reciprocal pairs, and hence
\[
\exp(A) \sim \exp(A)^{-1}=\overline{\exp(A)}^{-1}.
\]
Therefore \(\exp(A)\) is \(c\)-reversible.
\end{remark}
 
 \begin{lemma}\label{lem:3}
     Let $(N,x)\in\mathfrak{aff}(n,\mathbb{C})$ then $(N,x)$ is strongly \emph{adjoint $c$-real} if and only if there exists $(B,w)\in \mathrm{Aff}(n,\mathbb{C})$ which obeys the following conditions.
     \begin{enumerate}[(a)]
         \item $BNB^{-1}=-\overline{N}$ and $B\overline{B}=I_n$,
         \item $Bx+\overline{x}=-\overline{N}w$ and $B\overline{w}+w=0$.
     \end{enumerate}
 \end{lemma}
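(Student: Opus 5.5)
The plan is to unwind the definition of strong adjoint $c$-reality using the explicit adjoint action, exactly as in Lemma~\ref{lem:1} and Lemma~\ref{lem:2}. By definition, $(N,x)$ is strongly adjoint $c$-real if and only if there is $g=(B,w)\in\mathrm{Aff}(n,\mathbb{C})$ with
\[
\mathrm{Ad}_{(B,w)}(N,x) = -\overline{(N,x)} = (-\overline{N},-\overline{x})
\quad\text{and}\quad (B,w)\overline{(B,w)}=(I_n,0).
\]
So the proof consists of two independent computations, one for each condition, followed by a comparison of linear and translational components.

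First, I will use the formula $\mathrm{Ad}_{(B,w)}(N,x)=(BNB^{-1},\,B(x-Nw))$. This is the displayed formula in the paper, with $(A,v)$ replaced by $(B,w)$ and $(X,u)$ by $(N,x)$. Equating it with $(-\overline{N},-\overline{x})$ gives two conditions. The linear component gives $BNB^{-1}=-\overline{N}$, the first half of (a). The translational component gives $B(x-Nw)=-\overline{x}$, that is, $Bx+\overline{x}=BNw$.

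Second, I will compute the group product $(B,w)(\overline{B},\overline{w})=(B\overline{B},\,B\overline{w}+w)$, which is the same computation as in Lemma~\ref{lem:2}. Setting this equal to $(I_n,0)$ yields $B\overline{B}=I_n$ and $B\overline{w}+w=0$, which completes (a) and gives the second half of (b).

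The only step needing care is matching $Bx+\overline{x}=BNw$ with the stated form $Bx+\overline{x}=-\overline{N}w$, since in general $BN\neq -\overline{N}$. I will resolve this using the relation $BN=-\overline{N}B$ from (a), together with the precise convention for $\mathrm{Ad}$ and for the identification of the semidirect product. The expected subtlety is which element ($w$ or $Bw$, that is, $h$ versus its translational normalization) appears in the translational equation. I will recompute $\mathrm{Ad}$ directly via the matrix embedding into $\mathfrak{gl}(n+1,\mathbb{C})$ and write the conditions in terms of whichever representative makes the identity $-\overline{N}w$ appear. If the convention forces $BNw$, I will note that $BNw=-\overline{N}Bw$ and rename accordingly. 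Because every step is an equivalence, both directions of the "if and only if" follow at once.
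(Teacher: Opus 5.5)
Your approach is the paper's: it gives no separate proof, saying only that the argument runs along the lines of Lemma~\ref{lem:2}, i.e.\ unwind $\mathrm{Ad}_{(B,w)}(N,x)=-\overline{(N,x)}$ and $(B,w)\overline{(B,w)}=e$ componentwise. The step that fails as written is your starting formula. The displayed expression $\mathrm{Ad}_{(A,v)}(X,u)=(AXA^{-1},A(u-Xv))$ is not correct under the paper's convention $(A,v)\colon z\mapsto Az+v$ and the embedding $\Theta$. Computing in $\mathfrak{gl}(n+1,\mathbb{C})$ gives
\[
\begin{pmatrix} B & w\\ 0 & 1\end{pmatrix}\begin{pmatrix} N & x\\ 0 & 0\end{pmatrix}\begin{pmatrix} B^{-1} & -B^{-1}w\\ 0 & 1\end{pmatrix}=\begin{pmatrix} BNB^{-1} & Bx-BNB^{-1}w\\ 0 & 0\end{pmatrix}.
\]
So the translational equation is $Bx-BNB^{-1}w=-\overline{x}$, i.e.\ $Bx+\overline{x}=BNB^{-1}w$. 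By the first relation in (a) this is exactly $Bx+\overline{x}=-\overline{N}w$. The mismatch you detected is therefore an artifact of the misstated formula and disappears without any renaming. Given $BNB^{-1}=-\overline{N}$, the two translational equations coincide, so every step is an equivalence and both directions follow.

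You should drop the fallback of replacing $w$ by $Bw$, because $w$ also enters the coninvolution condition. With $w'=Bw$ and $\overline{B}=B^{-1}$ (from $B\overline{B}=I_n$), the equation $B\overline{w}+w=0$ becomes $B^{3}\overline{w'}+w'=0$, not $B\overline{w'}+w'=0$. The renamed pair would therefore not satisfy (b). There is also no freedom of ``convention'' here. The lemma is used in Lemma~\ref{lem:5} through $g(\exp X)g^{-1}=\exp(\mathrm{Ad}_g X)$, which requires $\mathrm{Ad}$ to be the genuine conjugation computed above. With that formula in place of the displayed one, your argument is complete.
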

 
 Lemma \ref{lem:3} can proved in the same line as Lemma \ref{lem:2}. Therefore we omit it here.
 \par Using Lemma \ref{lem:3} we can prove our next Lemma.
 \begin{lemma}\label{lem:4}
     The element $(\mathrm{J}(0,n),x)\in \mathfrak{aff}(n,\mathbb{C})$ is strongly \emph{adjoint $c$-real}.
 \end{lemma}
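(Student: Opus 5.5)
We apply Lemma~\ref{lem:3} with $N=\mathrm{J}(0,n)$: we construct a pair $(B,w)$ satisfying conditions (a) and (b) explicitly. Write $x=(x_1,\dots,x_n)^T$ and let $D=\operatorname{diag}(\varepsilon_1,\dots,\varepsilon_n)$ with $\varepsilon_k=(-1)^k$. We take $B=cD$ with a scalar $c$ of modulus one, to be fixed later.

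\emph{Condition (a).} Conjugating $\mathrm{J}(0,n)$ by $D$ multiplies the $(k,k+1)$ entry by $\varepsilon_k\varepsilon_{k+1}=-1$. Hence $DND^{-1}=-N=-\overline{N}$, since $N$ is real, and the same holds for $B=cD$. Moreover $B\overline{B}=|c|^2D^2=I_n$. So (a) holds for every unimodular $c$.

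\emph{Condition (b), first equation.} We have $(Nw)_k=w_{k+1}$ for $k<n$ and $(Nw)_n=0$. The equation $Bx+\overline{x}=-Nw$ therefore splits into two parts:
\begin{itemize}
\item[(1)] the compatibility condition $c\,\varepsilon_n x_n+\overline{x}_n=0$ in the last coordinate;
\item[(2)] the recursion $w_{k+1}=-(c\,\varepsilon_k x_k+\overline{x}_k)$ for $1\le k\le n-1$.
\end{itemize}
The freedom in $c$ is used for (1). If $x_n\neq 0$ we take $c=-\varepsilon_n\overline{x}_n/x_n$, which has modulus one. If $x_n=0$, any unimodular $c$ works, say $c=1$. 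We then define $w_2,\dots,w_n$ by (2) and leave $w_1$ free, setting $w_1=0$.

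\emph{Condition (b), second equation.} It remains to check $B\overline{w}+w=0$, that is, $c\,\varepsilon_k\overline{w}_k+w_k=0$ for every $k$. For $k=1$ this is immediate since $w_1=0$. For $k\ge 2$, using $c\overline{c}=1$ and $\varepsilon_k\varepsilon_{k-1}=-1$, we compute
\[
c\,\varepsilon_k\overline{w}_k=-c\,\varepsilon_k\bigl(\overline{c}\,\varepsilon_{k-1}\overline{x}_{k-1}+x_{k-1}\bigr)=\overline{x}_{k-1}+c\,\varepsilon_{k-1}x_{k-1}=-w_k .
\]
So the second equation holds automatically. Lemma~\ref{lem:3} then gives that $(\mathrm{J}(0,n),x)$ is strongly adjoint $c$-real.

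The only real obstacle is the last coordinate, because $N$ is not invertible and so $-Nw$ cannot hit an arbitrary vector. The sign matrix alone does not suffice, since it cannot make the $n$-th entry of $Dx+\overline{x}$ vanish. The remedy is the unimodular phase $c$, which is harmless for (a) and is chosen precisely to kill that last entry. The sign alternation $\varepsilon_k\varepsilon_{k-1}=-1$ is what makes the second equation of (b) hold with no further choices.
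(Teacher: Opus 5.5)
Your proof is correct and uses the same construction as the paper: an alternating-sign diagonal $B$ multiplied by a unimodular phase determined by $x_n$, with $w$ then solved from the first equation of condition (b) in Lemma~\ref{lem:3}. Your bookkeeping is actually more careful than the paper's, and it fixes two errors there: the paper's $B=\mathrm{diag}(a,-a,a,\dots)$ with $a=\overline{x}_n/x_n$ kills the last coordinate only when $n$ is even, and its displayed $w_k=-(Bx+\overline{x})_k$ lacks the index shift $w_{k+1}=-(Bx+\overline{x})_k$, $w_1=0$ that you use (without that shift both equations in (b) fail unless $Bx+\overline{x}=0$), whereas your phase $c=-\varepsilon_n\overline{x}_n/x_n$ and your recursion repair both points.
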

 \begin{proof}  
{
Let $x=(x_1,\ldots,x_n)^t \in \mathbb{C}^n$ and write $x_n=r_n e^{i\theta}$ with $r_n\in \mathbb{R}^+$. Set $a=e^{-2i\theta}$ and define 
\[
B=\mathrm{diag}(a,-a,a,-a,\ldots).
\]
Define $w=(w_1,\ldots,w_n)^t$ by
\[
w_k =
\begin{cases}
-(a x_k + \overline{x}_k), & k \text{ odd},\\
\;\;a x_k - \overline{x}_k, & k \text{ even}.
\end{cases}
\]
Then a direct computation shows that $BNB^{-1}=-\overline{N}$ and 
$B\overline{w}+w=0$. Moreover,
\[
Bx+\overline{x}=-\overline{N}w.
\]
Hence the conditions of Lemma~\ref{lem:3} are satisfied for $N=\mathrm{J}(0,n)$.
}
\end{proof}

    \begin{lemma}\label{lem:5}
Let $g=(A,v)\in \mathrm{Aff}(n,\mathbb{C})$ with $A=\mathrm{J}(1,n)$. Then $g$ is strongly $c$-reversible in $\mathrm{Aff}(n,\mathbb{C})$.
\end{lemma}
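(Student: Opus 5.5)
The plan is to reduce to Lemma~\ref{lem:4} via the exponential map, using the discussion preceding Lemma~\ref{lem:3}: if $X\in\mathfrak{aff}(n,\mathbb{C})$ is strongly adjoint $c$-real, then $\exp X$ is strongly $c$-reversible. Since strong $c$-reversibility is invariant under conjugation in $\mathrm{Aff}(n,\mathbb{C})$, it suffices to show that $g=(\mathrm{J}(1,n),v)$ is conjugate to $\exp(\mathrm{J}(0,n),x)$ for some $x\in\mathbb{C}^n$.

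\textbf{Step 1 (computing the exponential).} Embed into $\mathfrak{gl}(n+1,\mathbb{C})$ as $\begin{pmatrix} N & x\\ 0&0\end{pmatrix}$ with $N=\mathrm{J}(0,n)$. This matrix is nilpotent, so its exponential is a finite sum, and we get
\[
\exp(N,x)=\Bigl(e^{N},\ \textstyle\sum_{k\ge 0}\frac{N^{k}}{(k+1)!}\,x\Bigr)=(e^N,\ \phi(N)x),
\]
where $\phi(N)=\sum_{k\ge0}N^k/(k+1)!$ is unipotent and hence invertible. So for any prescribed $v$ we can choose $x=\phi(N)^{-1}v$, and then $\exp(N,x)=(e^N,v)$.

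\textbf{Step 2 (matching the linear part).} The matrix $e^{N}$ is unipotent and $e^N-I=N+N^2/2+\cdots$ has rank $n-1$. Hence $e^N$ has a single Jordan block, and there is $S\in\mathrm{GL}(n,\mathbb{C})$ with $S e^N S^{-1}=\mathrm{J}(1,n)$.

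\textbf{Step 3 (conjugating).} Conjugating by $(S,0)$ gives $(S,0)(e^N,v')(S,0)^{-1}=(\mathrm{J}(1,n),Sv')$. Given $g=(\mathrm{J}(1,n),v)$, take $v'=S^{-1}v$ and $x=\phi(N)^{-1}S^{-1}v$. Then $g=(S,0)\exp(N,x)(S,0)^{-1}$.

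\textbf{Step 4 (concluding).} By Lemma~\ref{lem:4}, $(N,x)$ is strongly adjoint $c$-real for every $x$. Hence $\exp(N,x)$ is strongly $c$-reversible, and by conjugation invariance so is $g$.

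The main point to check is the claim that strong adjoint $c$-reality passes to the exponential in the affine setting. The reverser is $h=(B,w)$ with $h\overline{h}=e$. One needs $\mathrm{Ad}_h(X)=-\overline{X}$ to give $h\exp(X)h^{-1}=\exp(-\overline{X})$, which holds because the adjoint action is conjugation in $\mathrm{GL}(n+1,\mathbb{C})$ under $\Theta$. One also needs $\exp(-\overline X)=\overline{\exp X}^{-1}$, which holds because the exponential series has real coefficients. Both facts are recorded before Lemma~\ref{lem:3}, so the remaining obstacle is only the routine exponential computation in Step~1, together with the Jordan type computation in Step~2.
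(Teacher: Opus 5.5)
Your proposal is correct and follows the paper's proof: you write $g$ as the conjugate by $(S,0)$ of $\exp(\mathrm{J}(0,n),x)=(e^{N},\phi(N)x)$ using the invertibility of $\phi(N)$, then invoke Lemma~\ref{lem:4}, the passage from strong adjoint $c$-reality to strong $c$-reversibility, and conjugation invariance. Your choice $x=\phi(N)^{-1}S^{-1}v$ is slightly more careful than the paper's. The paper solves $Cx=v$, so after conjugating by $(B,0)$ it actually obtains $(A,Bv)$ rather than $(A,v)$.
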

   \begin{proof}
Let $N=J(0,n)$. Then $\exp(N)$ is a unipotent matrix whose Jordan form is
$J(1,n)$. Hence there exists $B\in \mathrm{GL}(n,\mathbb{C})$ such that
\[
B\,\exp(N)\,B^{-1}=A.
\]

Consider the exponential map in $\mathrm{Aff}(n,\mathbb{C})$:
\[
\exp(N,x)=(\exp(N),Cx),
\]
where
\[
C=I+\frac{N}{2!}+\frac{N^2}{3!}+\cdots+\frac{N^{n-1}}{n!}.
\]
Since $C$ is invertible. Hence there exists $x\in\mathbb{C}^n$
such that $Cx=v$.

Now conjugating $\exp(N,x)$ by $(B,0)$ we obtain
\[
(B,0)\exp(N,x)(B,0)^{-1}=(A,v)=g.
\]

Since $(N,x)$ is strongly $c$-real by lemma \ref{lem:4}, it follows that $\exp(N,x)$ is strongly $c$-reversible. and the property is preserved
under conjugation, it follows that $g$ is strongly $c$-reversible.
\end{proof}


  \begin{proposition}\label{pro1}
Let $g=(U,v)\in \operatorname{Aff}(n,\mathbb{C})$, where $U$ is unipotent. 
Then $g$ is strongly $c$-reversible. In particular, $g$ can be written as a 
product of two coninvolutions in $\operatorname{Aff}(n,\mathbb{C})$.
\end{proposition}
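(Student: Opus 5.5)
We first use the invariance of strong $c$-reversibility under conjugation to put $g$ in normal form. By the Jordan form lemma for unipotent matrices, there is $S\in\mathrm{GL}(n,\mathbb{C})$ with $SUS^{-1}=I_{m_0}\oplus \mathrm{J}(1,m_1)\oplus\cdots\oplus \mathrm{J}(1,m_k)$. Conjugating $g$ by $(S,0)$ replaces $g$ by $(SUS^{-1},Sv)$, and this is strongly $c$-reversible if and only if $g$ is. So we may assume $U$ is in this block form, with $v=(v^{(0)},v^{(1)},\dots,v^{(k)})$ split according to the blocks.

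Next we handle the identity block. We conjugate by a translation $(I,t)$, which changes $v$ to $v+(I-U)t$. On the identity block $I-U$ vanishes, so $v^{(0)}$ cannot be removed this way. Instead we treat $I_{m_0}$ together with $v^{(0)}$ directly. If $v^{(0)}\neq0$, choose a linear change of coordinates inside the $I_{m_0}$ block sending $v^{(0)}$ to $e_1$. Then the pair $(I_{m_0},v^{(0)})$ decomposes as $(I_1,1)\oplus(I_{m_0-1},0)$. The piece $(I_{m_0-1},0)$ is strongly $c$-reversible by $h=(I,0)$. The piece $(I_1,1)$, a translation by $1$ on $\mathbb{C}$, is $\exp(0,1)$. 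The element $(0,1)$ is strongly adjoint $c$-real via Lemma~\ref{lem:3} with $B=-1$ and $w=0$: condition (a) holds trivially, and (b) reads $-1+1=0$ and $0=0$. Equivalently, this is Lemma~\ref{lem:5} with $n=1$, since $\mathrm{J}(1,1)=(1)$; Lemma~\ref{lem:4} with $n=1$ covers $\mathrm{J}(0,1)=0$ as well. Each Jordan piece $(\mathrm{J}(1,m_i),v^{(i)})$ is strongly $c$-reversible by Lemma~\ref{lem:5}.

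The main step is gluing. The affine group does not literally split as a direct product over blocks. However, if $g=(A_1\oplus A_2,(v_1,v_2))$ and $h_j=(B_j,w_j)$ strongly $c$-reverse $(A_j,v_j)$ in $\mathrm{Aff}(n_j,\mathbb{C})$, then $h=(B_1\oplus B_2,(w_1,w_2))$ satisfies the conditions of Lemma~\ref{lem:2}. Those conditions are blockwise: (a) and (b) are linear or multiplicative relations that respect the block decomposition, because $A$, $B$ and $\overline{A}^{-1}$ are all block diagonal. We verify this by writing out Lemma~\ref{lem:2}(a),(b) blockwise. Applying it inductively to the blocks produced above gives a coninvolutory reverser for the normal form, and conjugation invariance transfers this back to $g$. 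The final sentence of the statement then follows from the lemma identifying strong $c$-reversibility with being a product of two coninvolutions.

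I expect the main obstacle to be ensuring Lemma~\ref{lem:5} applies to each Jordan block with an arbitrary translation. Its proof relies on $C=I+N/2!+\cdots$ being invertible, which holds since $C$ is unipotent. One also needs the conjugating matrix in that lemma to realise the given $v$ exactly. This works because we conjugate by a linear map fixing the block, and the $x$ is chosen after fixing $B$.
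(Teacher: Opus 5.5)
Your proposal is correct and follows the paper's route: conjugate $U$ to Jordan form, apply Lemma~\ref{lem:5} to each block $(\mathrm{J}(1,m_i),v^{(i)})$ (the $1\times 1$ identity blocks included), and assemble the reversers blockwise. Two steps of yours fill in details the paper's short proof leaves implicit: the blockwise check of the conditions of Lemma~\ref{lem:2}, and the observation that in Lemma~\ref{lem:5} the vector $x$ must be chosen after $B$, so that $BCx=v$.
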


\begin{proof}
Since $U$ is unipotent, it is conjugate to a direct sum of Jordan blocks 
$\mathrm{J}(1,n_i)$. By the previous lemma, each affine transformation of the form 
$(\mathrm{J}(1,n_i),v_i)$ is strongly $c$-reversible. Hence $(U,v)$ is strongly 
$c$-reversible. Consequently, $g$ can be written as a product of two 
coninvolutions in $\operatorname{Aff}(n,\mathbb{C})$.
\end{proof}

  \subsection{Proof of \thmref{Thm1}}

We first prove that \((i) \Rightarrow (iii)\).
Let \(g=(A,v)\in \mathrm{Aff}(n,\mathbb{C})\). Since \(g\) is \(c\)-reversible, by lemma ~\ref{lem:1} there exists \(B\in \mathrm{GL}(n,\mathbb{C})\) such that
\[
BAB^{-1}=\overline{A}^{-1}.
\]
This shows that \(L(g)=A\) is \(c\)-reversible in \(\mathrm{GL}(n,\mathbb{C})\).

The implication \((ii)\Rightarrow(i)\) is immediate from the definition of strong \(c\)-reversibility.

Finally, we prove that \((iii)\Rightarrow(ii)\).
  Suppose that the linear part $L(g)=A$ is \textit{c}-reversible. 
Up to conjugacy in $\operatorname{Aff}(n,\mathbb{C})$, we may assume that
\[
g=(A,v)
\]
is of the form
\[
A=
\begin{pmatrix}
T & 0 \\
0 & U
\end{pmatrix},
\qquad
v=
\begin{pmatrix}
0_{\,n-m} \\
\widetilde{v}_{\,m}
\end{pmatrix},
\]
where $T \in \mathrm{GL}(n-m,\mathbb{C})$ has no eigenvalue equal to $1$, 
and $U \in \mathrm{GL}(m,\mathbb{C})$ has $1$ as its only eigenvalue. 
Here $0_{\,n-m}$ denotes the zero vector in $\mathbb{C}^{\,n-m}$ and 
$\widetilde{v}_{\,m} \in \mathbb{C}^{\,m}$.
\\Since $A$ is \textit{c}-reversible, there exists $B \in \mathrm{GL}(n,\mathbb{C})$ such that;
\[
BAB^{-1}=\overline{A}^{-1}.
\]
Let 
\[
B=\begin{pmatrix} B_1 & B_2 \\ B_3 & B_4 \end{pmatrix}.
\]
Then
\[
BA=\begin{pmatrix} B_1T & B_2U \\ B_3T & B_4U \end{pmatrix},\quad
\overline{A}^{-1}B=\begin{pmatrix} \overline{T}^{-1}B_1 & \overline{T}^{-1}B_2 \\ \overline{U}^{-1}B_3 & \overline{U}^{-1}B_4 \end{pmatrix}.
\]
Thus,
\[
B_1T=\overline{T}^{-1}B_1,\;
B_2U=\overline{T}^{-1}B_2,\;
B_3T=\overline{U}^{-1}B_3,\;
B_4U=\overline{U}^{-1}B_4.
\]
Since $\sigma(U)=\{1\}$ and $1 \notin \sigma(T)$, the Sylvester equations
\[
B_2U=\overline{T}^{-1}B_2, \quad B_3T=\overline{U}^{-1}B_3
\]
imply $B_2=0$ and $B_3=0$. Hence
\[
B=\begin{pmatrix} B_1 & 0 \\ 0 & B_4 \end{pmatrix}.
\]
As $B$ is invertible, it follows that $B_1$ and $B_4$ are invertible. Therefore
\[
B_1TB_1^{-1}=\overline{T}^{-1},
\]
so $T$ is $c$-reversible.
Thus $T$ is strongly $c$-reversible in $\mathrm{GL}(n-m,\mathbb{C})$. Hence
\[
T = T_1 T_2,
\]
where $T_1$ and $T_2$ are coninvolutions in $\mathrm{GL}(n-m,\mathbb{C})$.

Now $(U,\tilde{v})$ is also strongly $c$-reversible by Proposition~\ref{pro1}.
Hence
\[
(U,\tilde{v}) = (U_1,v_1)(U_2,v_2),
\]
where $(U_1,v_1)$ and $(U_2,v_2)$ are coninvolutions.

Thus
\[
g=(A,v)=(T_1 \oplus U_1,\, 0 \oplus v_1)\,(T_2 \oplus U_2,\, 0 \oplus v_2)
\]
so $g$ is a product of two coninvolutions. This completes the proof.\qed
\section{Product of three coninvolutions}\label{sec:4}
\begin{lemma}
Suppose $g$ is a product of $(2m+1)$ coninvolutions in $\mathrm{Aff}(n,\mathbb{C})$. Then any consimilar element of $g$ can also be written as a product of $(2m+1)$ coninvolutions.
\end{lemma}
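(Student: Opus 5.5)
The plan is to write $g$ as a product of an odd number of coninvolutions and absorb the conjugating factors $k$ and $\overline{k}^{-1}$ into the two outermost ones. Suppose $g = c_1 c_2 \cdots c_{2m+1}$ with each $c_i\overline{c_i}=e$, and let $h = k g \overline{k}^{-1}$ for some $k\in\mathrm{Aff}(n,\mathbb{C})$. Then
\[
h = (k c_1 \overline{k}^{-1})(\overline{k} c_2 k^{-1})(k c_3 \overline{k}^{-1})\cdots(k c_{2m+1}\overline{k}^{-1}).
\]
Here we insert $\overline{k}^{-1}\overline{k}$ and $k^{-1}k$ alternately between consecutive factors. Because the number of factors is odd, the pattern starts and ends with a factor of the form $k c \overline{k}^{-1}$, so the outer terms match $k$ on the left and $\overline{k}^{-1}$ on the right. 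This parity bookkeeping is the only point that needs care, and it is exactly why the lemma is stated for $2m+1$ factors.

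It remains to check that each new factor is a coninvolution. For odd indices,
\[
(k c_i \overline{k}^{-1})\,\overline{(k c_i \overline{k}^{-1})} = k c_i \overline{k}^{-1}\,\overline{k}\,\overline{c_i}\,k^{-1} = k (c_i\overline{c_i}) k^{-1} = e .
\]
For even indices,
\[
(\overline{k} c_i k^{-1})\,\overline{(\overline{k} c_i k^{-1})} = \overline{k} c_i k^{-1} k\, \overline{c_i}\,\overline{k}^{-1} = \overline{k}(c_i\overline{c_i})\overline{k}^{-1} = e .
\]

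Hence $h$ is a product of $2m+1$ coninvolutions. There is no real obstacle here; the argument uses only that complex conjugation is a group automorphism of $\mathrm{Aff}(n,\mathbb{C})$ with $\overline{\overline{k}}=k$.
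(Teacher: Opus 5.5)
Your proof is correct and is essentially the paper's argument: it uses the same alternating insertion $(kc_1\overline{k}^{-1})(\overline{k}c_2k^{-1})\cdots(kc_{2m+1}\overline{k}^{-1})$, together with the fact that both $c\mapsto kc\overline{k}^{-1}$ and $c\mapsto \overline{k}ck^{-1}$ send coninvolutions to coninvolutions. Your version is slightly more careful than the paper's, since you check the even-indexed factors $\overline{k}c_ik^{-1}$ separately, whereas the paper's text labels every factor as $hg_i\overline{h}^{-1}$.
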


\begin{proof}
Let
\[
g = g_1 g_2 \cdots g_{2m+1},
\]
where each $g_i$ is a coninvolution.

Let $h \in \mathrm{Aff}(n,\mathbb{C})$. Then
\[
h g \overline{h}^{-1}
= (h g_1 \overline{h}^{-1})(\bar{h} g_2 {h}^{-1}) \cdots (h g_{2m+1} \overline{h}^{-1}).
\]
Set
\[
g_i' = h g_i \overline{h}^{-1}, \quad i=1,\dots,2m+1.
\]
Since consimilarity preserves coninvolutions, each $g_i'$ is a coninvolution. Hence
\[
h g \overline{h}^{-1} = g_1' g_2' \cdots g_{2m+1}',
\]
which is a product of $(2m+1)$ coninvolutions.

This completes the proof.
\end{proof}
We shall use the following result of Horn and Johnson in the Lemma \ref{lem:6}.
\begin{lemma}[cf.~{\cite[Lem.~4.6.9]{hj}}]\label{lem:horn}
  Let $A\in \mathrm{M}_n(\mathbb{C)}$ be given. Then $A\bar{A} = I $ if and only if there is a nonsingular
$S \in \mathrm{M}_n(\mathbb{C)}$ such that $A = S \bar{S}^{-1}$.  
\end{lemma}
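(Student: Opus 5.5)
The statement is the matrix result of Horn and Johnson: for $A\in \mathrm{M}_n(\mathbb{C})$, $A\overline{A}=I$ if and only if $A=S\overline{S}^{-1}$ for some nonsingular $S$. The easy direction is a one-line check. If $A=S\overline{S}^{-1}$, then
\[
A\overline{A}=S\overline{S}^{-1}\,\overline{S}S^{-1}=I .
\]

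For the converse I would use the classical averaging trick, analogous to Hilbert's Theorem 90 for the Galois group $\{1,\sigma\}$ of $\mathbb{C}/\mathbb{R}$. For $\theta\in\mathbb{R}$ set
\[
S_\theta=e^{i\theta}I+A\,e^{-i\theta}I=e^{i\theta}I+e^{-i\theta}A .
\]
Since $\overline{S_\theta}=e^{-i\theta}I+e^{i\theta}\overline{A}$, and using $A\overline{A}=I$,
\[
A\overline{S_\theta}=e^{-i\theta}A+e^{i\theta}A\overline{A}=e^{-i\theta}A+e^{i\theta}I=S_\theta .
\]
So whenever $S_\theta$ is invertible, $\overline{S_\theta}$ is invertible too, and $A=S_\theta\overline{S_\theta}^{-1}$.

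The only real obstacle is to choose $\theta$ so that $S_\theta$ is nonsingular. Note that $S_\theta=e^{-i\theta}(A+e^{2i\theta}I)$. This is singular exactly when $-e^{2i\theta}$ is an eigenvalue of $A$. Because $A$ has at most $n$ eigenvalues, only finitely many values of $e^{2i\theta}$ on the unit circle are excluded, so some $\theta$ works. For example, if $-1$ is not an eigenvalue of $A$, then $\theta=0$ and $S=I+A$ already suffice.

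This completes the plan. In the affine setting, Lemma~\ref{lem:horn} will later be applied to the linear part of a coninvolution, and the translation part will be handled separately.
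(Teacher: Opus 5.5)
Your proof is correct: the identity $A\overline{S_\theta}=S_\theta$, together with choosing $\theta$ so that $-e^{2i\theta}\notin\sigma(A)$, gives $A=S_\theta\overline{S_\theta}^{-1}$. The paper gives no proof of its own and simply cites Horn--Johnson, whose proof is this same Hilbert-90-type averaging argument, with $S_\theta=e^{i\theta}A+e^{-i\theta}I$ (your construction with $\theta$ replaced by $-\theta$), so your route is essentially the standard one.
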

\begin{lemma}\label{lem:6}
Let \( g \in \mathrm{Aff}(n,\mathbb{C}) \) be coninvolution. Then there exists \( h \in \mathrm{Aff}(n,\mathbb{C}) \) such that
\[
g = h\overline{h}^{-1}.
\]
\end{lemma}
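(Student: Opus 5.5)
Write $g=(A,v)$ with $g\overline{g}=e$. Expanding $(A,v)(\overline{A},\overline{v})=(A\overline{A},\,A\overline{v}+v)$, the coninvolution condition is equivalent to
\[
A\overline{A}=I_n \quad\text{and}\quad A\overline{v}+v=0 .
\]
We look for $h=(S,u)$ with $g=h\overline{h}^{-1}$, that is, $g\overline{h}=h$. Since $g\overline{h}=(A\overline{S},\,A\overline{u}+v)$, this amounts to the two conditions
\[
A\overline{S}=S \quad\text{and}\quad A\overline{u}+v=u .
\]
This splits the problem into a linear part and a translation part, and the plan is to solve them separately.

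\textbf{Linear part.} By Lemma~\ref{lem:horn}, since $A\overline{A}=I_n$ there is a nonsingular $S$ with $A=S\overline{S}^{-1}$, i.e.\ $A\overline{S}=S$. Fix such an $S$.

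\textbf{Translation part.} We need $u\in\mathbb{C}^n$ with $u-A\overline{u}=v$. This equation is only $\mathbb{R}$-linear, so we cannot simply invert a matrix. Instead we guess $u=\tfrac12 v$ and check it:
\[
u-A\overline{u}=\tfrac12\bigl(v-A\overline{v}\bigr)=\tfrac12(v+v)=v,
\]
using $A\overline{v}=-v$. So $u=\tfrac12 v$ works.

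\textbf{Conclusion.} With $h=(S,\tfrac12 v)$ we get $h=g\overline{h}$, hence $g=h\overline{h}^{-1}$. The map $\Theta$ gives a clean matrix check: $\Theta(g)=\Theta(h)\,\overline{\Theta(h)}^{-1}$, since conjugation commutes with $\Theta$.

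\textbf{Expected obstacle.} The only potentially delicate step is the translation equation, because of its conjugate-linearity. It is resolved by the averaging choice $u=\tfrac12 v$, which uses exactly the second coninvolution condition $A\overline{v}+v=0$. If that guess had failed, the fallback would be the general averaging trick: set $u=\tfrac12(v+\lambda(\cdot))$ for a suitable correction and invoke surjectivity of the real-linear map $u\mapsto u-A\overline{u}$ onto the $(-1)$-eigenspace of $v\mapsto A\overline{v}$. That is unnecessary here.
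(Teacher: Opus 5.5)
Your proposal is correct and follows the paper's proof essentially verbatim: the paper also obtains $A=B\overline{B}^{-1}$ from Lemma~\ref{lem:horn}, sets $h=(B,\tfrac12 v)$, and verifies $g\overline{h}=h$ using $A\overline{B}=B$ and $A\overline{v}=-v$. The fallback discussion in your last paragraph is never used, so it does not affect the proof.
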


\begin{proof}
If \( g = h\overline{h}^{-1} \), then
\[
g\overline{g} = h\overline{h}^{-1}\,\overline{h}h^{-1} = e.
\]

Conversely, let \( g=(A,v) \in \mathrm{Aff}(n,\mathbb{C}) \) satisfy \( g\overline{g}=e \). Then
\[
A\overline{A}=I \quad \text{and} \quad A\overline{v}+v=0.
\]
By lemma \ref{lem:horn}, there exists \( B \in \mathrm{GL}(n,\mathbb{C}) \) such that
\[
A = B\overline{B}^{-1}.
\]
Define
\[
h=\left(B,\frac{v}{2}\right).
\]
Then
\[
g\overline{h}
= (A,v)\left(\overline{B},\frac{\overline{v}}{2}\right)
= \left(A\overline{B},\, A\frac{\overline{v}}{2}+v\right)
= \left(B,\,-\frac{v}{2}+v\right)
= \left(B,\frac{v}{2}\right)
= h,
\]
where we used \( A\overline{B}=B \) and \( A\overline{v}=-v \). Hence \( g = h\overline{h}^{-1} \).
\end{proof}
\subsection{Proof of \thmref{Thm2}}
Suppose $g = g_1 g_2 g_3$, where each $g_i$ is a coninvolution.

By lemma \ref{lem:6}, there exists $h \in \mathrm{Aff}(n,\mathbb{C})$ such that
\[
g_1 = h \overline{h}^{-1}.
\]
Hence,
\[
h^{-1} g \overline{h} 
= (\overline{h}^{-1} g_2 h)(h^{-1} g_3 \overline{h})
= g_2' g_3'.
\]
Since consimilarity preserves coninvolutions, So $g_2'$ and $g_3'$ are coninvolutions.  
Thus $g$ is consimilar to a product of two coninvolutory affine transformations.

\medskip

Conversely, suppose there exists $h \in \mathrm{Aff}(n,\mathbb{C})$ such that
\[
h g \overline{h}^{-1} = g_1 g_2,
\]
where $g_1, g_2$ are coninvolutions. Then
\[
g = h^{-1} g_1 g_2 \overline{h}
= (h^{-1} g_1 h)\,(h^{-1} g_2 h)\,(h^{-1} \overline{h}).
\]
Set
\[
g_1' = h^{-1} g_1 h, \quad
g_2' = h^{-1} g_2 h, \quad
g_3' = h^{-1} \overline{h}.
\]
Each $g_i'$ is a coninvolution. Hence
\[
g = g_1' g_2' g_3',
\]
so $g$ is a product of three coninvolutions.

This completes the proof. \qed
\section{Product of four coninvolutions}\label{sec:5}
\begin{lemma}
Let $g \in \operatorname{Aff}(n,\mathbb{C})$ be a product of $2m$ coninvolutions. Then any conjugate of $g$ in $\operatorname{Aff}(n,\mathbb{C})$ is also a product of $2m$ coninvolutions.
\end{lemma}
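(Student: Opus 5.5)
The plan is to mirror the argument used for the odd case (consimilarity), but now with ordinary conjugation, where the coninvolutions need to be paired up. Write
\[
g = g_1 g_2 \cdots g_{2m},
\]
where each $g_i$ is a coninvolution, and let $k \in \operatorname{Aff}(n,\mathbb{C})$ be arbitrary. Instead of conjugating each factor by $k$, which would not preserve coninvolutions, alternate between $k$ and $\overline{k}$ by inserting $e=\overline{k}^{-1}\overline{k}$ and $e=k^{-1}k$ between consecutive factors. This gives
\[
k g k^{-1} = (k g_1 \overline{k}^{-1})(\overline{k} g_2 k^{-1})(k g_3 \overline{k}^{-1})\cdots(\overline{k} g_{2m} k^{-1}).
\]
Because the number of factors is even, the product begins with $k$ and ends with $k^{-1}$, so the telescoping is consistent.

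Next, check that each new factor is a coninvolution. For a factor of the form $k g_i \overline{k}^{-1}$,
\[
(k g_i \overline{k}^{-1})\overline{(k g_i \overline{k}^{-1})} = k g_i \overline{k}^{-1}\,\overline{k}\,\overline{g_i}\,k^{-1} = k (g_i\overline{g_i}) k^{-1} = e.
\]
The factors $\overline{k} g_i k^{-1}$ are handled symmetrically: their product with the conjugate is $\overline{k}(g_i\overline{g_i})\overline{k}^{-1}=e$. Equivalently, $\overline{k} g_i k^{-1} = \overline{k}\,g_i\,\overline{\overline{k}}^{\,-1}$ is consimilar to $g_i$, and consimilarity preserves coninvolutions, as was already used in the proof of \thmref{Thm2}.

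Hence $kgk^{-1}$ is a product of $2m$ coninvolutions. There is no real obstacle here. The only point needing care is the bookkeeping of the alternation $k,\overline{k}$, which is exactly where the parity $2m$ is needed.
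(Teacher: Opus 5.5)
Your proof is correct, and it is essentially the paper's argument written out explicitly. The paper groups the factors into pairs $a_{2i-1}a_{2i}$ and invokes the conjugacy invariance of strong $c$-reversibility from Section~\ref{sec:2}. Unwinding that lemma for a pair $ab$ (reverser $\overline{a}$, conjugated to $\overline{k}\,\overline{a}\,k^{-1}$) gives exactly your factors $ka\overline{k}^{-1}$ and $\overline{k}bk^{-1}$, so your version simply exhibits the coninvolutions directly without routing through those lemmas.
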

    \begin{proof}
Suppose
\[
g = (a_1 a_2)(a_3 a_4)\cdots(a_{2m-1} a_{2m}),
\]
where each $a_i$ is a coninvolution. Then each product $a_{2i-1}a_{2i}$ is strongly $c$-reversible.

Let $h \in \operatorname{Aff}(n,\mathbb{C})$. Then
\[
hgh^{-1}
= (h a_1 a_2 h^{-1}) \cdots (h a_{2m-1} a_{2m} h^{-1}).
\]
Since strong $c$-reversibility is conjugacy invariant, each
\[
h a_{2i-1} a_{2i} h^{-1}
\]
is again a product of two coninvolutions. Hence $hgh^{-1}$ is a product of $2m$ coninvolutions.
\end{proof}
\begin{remark}
For an odd number of factors, being a product of coninvolutions is not, in general, a conjugacy invariant, whereas for involutions this property is always preserved under conjugation.

\end{remark}
\begin{example}
This example shows that coninvolutions are not conjugacy invariant in 
$\operatorname{Aff}(n,\mathbb{C})$.

Consider $g=(I,v) \in \operatorname{Aff}(n,\mathbb{C})$. Then
\[
g\overline{g}=(I,v)(I,\overline{v})=(I,v+\overline{v}),
\]
so $g$ is a coninvolution if and only if $v+\overline{v}=0$.

Now take $h=(A,0)$ with $A \in \mathrm{GL}(n,\mathbb{C})$. Then
\[
hgh^{-1}=(I,Av).
\]
Moreover,
\[
(Av)+\overline{(Av)} = Av+\overline{A}\,\overline{v}.
\]
Since $v+\overline{v}=0$, we have $\overline{v}=-v$, and hence
\[
(Av)+\overline{(Av)} = (A-\overline{A})v,
\]
which is not zero in general unless $v \in \ker(A-\overline{A})$. 
Thus $hgh^{-1}$ need not be a coninvolution, showing that coninvolutions are not preserved under conjugation.
\end{example}

 \subsection{Proof of \thmref{Thm3}}
In this theorem, we aim to show that every affine transformation 
$g=(A,v) \in \operatorname{Aff}(n,\mathbb{C})$ with $|\det A|=1$ can be written as a product of at most four coninvolutions. 

By the preceding lemma, it suffices to consider the case where, up to conjugacy,
\[
A=
\begin{pmatrix}
T & 0 \\
0 & U
\end{pmatrix},
\qquad
v=
\begin{pmatrix}
0_{\,n-m} \\
\widetilde{v}_{\,m}
\end{pmatrix}.
\]
    Since $U$ is unipotent, $\det U=1$, and hence $|\det T|=1$. {By \cite[Theorem~5]{ba}, $T$ can be written as a product of four coninvolutions}:
\[
T = T_1 T_2 T_3 T_4.
\]
By Proposition~\ref{pro1}, $(U,\widetilde{v})$ is strongly $c$-reversible. Hence there exist $(U_1,v_1),(U_2,v_2)\in \mathrm{Aff}(n,\mathbb{C})$ such that
\[
(U,\widetilde{v}) = (U_1,v_1)(U_2,v_2),
\]
where $(U_1,v_1)$ and $(U_2,v_2)$ are coninvolutions.
\[
g_1 = (T_1 \oplus U_1,\; 0_{\,n-m} \oplus v_1), \quad
g_2 = (T_2 \oplus U_2,\; 0_{\,n-m} \oplus v_2),
\]
\[
g_3 = (T_3 \oplus I_m,\; 0), \quad
g_4 = (T_4 \oplus I_m,\; 0).
\]
Then
\[
g = g_1 \, g_2 \, g_3 \, g_4,
\]
where each $g_i$ is a coninvolution. This completes the proof.\qed

\section*{Acknowledgments}
		Dutta acknowledges the Mizoram University for the Research and Promotion Grant F.No.A.1-1/MZU(Acad)/14/25-26. Gongopadhyay acknowledges ANRF research Grant CRG/2022/003680 and ANRF/ARGM/2025/000122/MTR. Mondal acknowledges the  CSIR grant no. \\ 09/0947(12987)/2021-EMR-I during the course of this work.

	\end{document}